\providecommand{\keywords}[1]{\textbf{Keywords.} #1}
\providecommand{\MSC}[1]{\textbf{AMS Subject Classifications.} #1}
\newtheorem{theorem}{Theorem}[section]
\newtheorem{lemma}[theorem]{Lemma}
\newtheorem{example}[theorem]{Example}
\theoremstyle{definition}
\newtheorem{definition}[theorem]{Definition}
\newtheorem{remark}[theorem]{Remark}
\renewcommand\epsilon{\varepsilon}
\renewcommand\mapsto{\longmapsto}
\newcommand{\R}{\field{R}\xspace}
\newcommand{\N}{\field{N}\xspace}
\newcommand{\field}[1]{\ensuremath{\mathbb{#1}}}
\newcommand{\ens}[1]{ \left\{#1\right\} }
\newcommand\diag{\mathrm{diag} \xspace}
\newcommand{\Topt}{T_{\mathrm{opt}}\left(\Lambda\right)}
\newcommand{\Tunif}{T_{\mathrm{unif}}\left(\Lambda\right)}
\newcommand\pt[1]{\frac{\partial #1}{\partial t}}
\newcommand\px[1]{\frac{\partial #1}{\partial x}}
\newcommand\pxi[1]{\frac{\partial #1}{\partial \xi}}
\newcommand\pas[1]{\frac{\partial #1}{\partial s}}
\newcommand\Tau{\mathcal{T}}
\newcommand\ssin{s^{\mathrm{in}}}
\newcommand\ssout{s^{\mathrm{out}}}
\newcommand\loc{{\mathrm{loc}}}
\newcommand\xmax{x_\Lambda}
\newcommand\Xm[2]{\ell_{#1}\left(#2\right)}
\title{Null controllability and finite-time stabilization in minimal time of one-dimensional first-order $2 \times 2$ linear hyperbolic systems}
\author{
Long Hu\thanks{School of Mathematics, Shandong University, Jinan, Shandong 250100, China.  E-mail: \texttt{hul@sdu.edu.cn}}
\and
Guillaume Olive\thanks{Institute of Mathematics, Jagiellonian University, Lojasiewicza 6, 30-348 Krakow, Poland. E-mail: \texttt{math.golive@gmail.com} or \texttt{guillaume.olive@uj.edu.pl}}
}
\date{\today}
\begin{document}

\maketitle

\begin{abstract}
The goal of this article is to present the minimal time needed for the null controllability and finite-time stabilization of one-dimensional first-order $2 \times 2$ linear hyperbolic systems.
The main technical point is to show that we cannot obtain a better time.
The proof combines the backstepping method with the Titchmarsh convolution theorem.
\end{abstract}

\keywords{Hyperbolic systems, Boundary controllability, Minimal control time, Backstepping method, Titchmarsh convolution theorem}

\vspace{0.2cm}
\MSC{35L40, 93B05, 93D15, 45D05}


\section{Introduction and main result}

\subsection{Problem description}

In this paper we are interested in the characterization of the minimal time needed for the controllability of the following class of one-dimensional first-order $2 \times 2$ linear hyperbolic systems:

\begin{equation}\label{syst}
\begin{dcases}
\pt{y_1}(t,x)+\lambda_1(x) \px{y_1}(t,x)=a(x)y_1(t,x)+b(x)y_2(t,x), \\
\pt{y_2}(t,x)+\lambda_2(x) \px{y_2}(t,x)=c(x)y_1(t,x)+d(x)y_2(t,x), \\
\begin{aligned}
& y_1(t,1)=u(t), && y_2(t,0)=0,  \\
& y_1(0,x)=y_1^0(x), && y_2(0,x)=y_2^0(x),
\end{aligned}
\end{dcases}
\quad t \in (0,+\infty), \, x \in (0,1).
\end{equation}

Such systems appear in linearized versions of various physical models of balance laws, see e.g. \cite[Chapter 1]{BC16}.
For instance, the telegrapher equations of Heaviside form a linear system of the form \eqref{syst} for some parameters (see e.g. \cite[Section 1.2 and (1.20)]{BC16} with $-1+\lambda R_0 C_\ell=0$).

In \eqref{syst}, $(y_1(t,\cdot),y_2(t,\cdot))$ is the state at time $t$, $(y_1^0,y_2^0)$ is the initial data and $u(t)$ is the control at time $t$.
We assume that the speeds $\lambda_1,\lambda_2 \in C^{0,1}([0,1])$ are such that
$$
\lambda_1(x)<0<\lambda_2(x), \quad \forall x \in [0,1].
$$
Finally, $a,b,c,d \in L^{\infty}(0,1)$ couple the equations of the system inside the domain (the matrix $\begin{pmatrix} a & b \\ c & d \end{pmatrix}$ will also be referred in the sequel to as the internal coupling matrix).

We recall that the system \eqref{syst} is well-posed: for every $u \in L^2_{\loc}(0,+\infty)$ and $(y^0_1, y^0_2) \in L^2(0,1)^2$, there exists a unique solution $(y_1,y_2) \in  C^0([0,+\infty);L^2(0,1)^2)$ to the system \eqref{syst}.
By solution we mean ``solution along the characteristics'' or ``broad solution'' (see e.g. \cite[Appendix A]{CHOS21}).
The same statement remains true if, in the boundary condition at $x=1$, $u$ is replaced by
\begin{equation}\label{feedback law}
u(t)=\int_0^1 \left(f_1(\xi)y_1(t,\xi)+f_2(\xi)y_2(t,\xi)\right) \,d\xi,
\end{equation}
for any $f_1,f_2 \in L^{\infty}(0,1)$.
The relation \eqref{feedback law} is called the ``feedback law''.

Let us now introduce the notions of controllability that we are interested in:

\begin{definition}\label{def stab FT}
Let $T>0$.
We say that the system \eqref{syst} is:
\begin{itemize}
\item
\textbf{finite-time stable with settling time $T$} if, for every $y^0_1, y^0_2 \in L^2(0,1)$, the corresponding solution to the system \eqref{syst} with $u=0$ satisfies
\begin{equation}\label{final cond}
y_1(T,\cdot)=y_2(T,\cdot)=0.
\end{equation}

\item
\textbf{finite-time stabilizable with settling time $T$} if there exist $f_1,f_2 \in L^{\infty}(0,1)$ such that, for every $y^0_1, y^0_2 \in L^2(0,1)$, the corresponding solution to the system \eqref{syst} with $u$ given by \eqref{feedback law} satisfies \eqref{final cond}.

\item
\textbf{null controllable in time $T$} if, for every $y^0_1, y^0_2 \in L^2(0,1)$, there exists $u \in L^2_{\loc}(0,+\infty)$ such that the corresponding solution to the system \eqref{syst} satisfies \eqref{final cond}.
\end{itemize}

\end{definition}

Obviously, finite-time stability implies finite-time stabilization, which in turn implies null controllability.

\begin{remark}\label{rem counterexample}
As we are trying to bring the solution of the system \eqref{syst} to the state zero, let us first mention that, in general, $u=0$ does not work.
Not only this, but in fact any static boundary output feedback laws, that is of the form $u(t)=k y_2(t,1)$ with $k \in \R$, does not work either in general.
A simple example is provided by the following $2 \times 2$ system with constant coefficients (see also \cite[Section 5.6]{BC16} when $y_2(t,0)=y_1(t,0)$):
\begin{equation}\label{syst ex}
\begin{dcases}
\pt{y_1}(t,x)-\px{y_1}(t,x)=\pi y_2(t,x), \\
\pt{y_2}(t,x)+ \px{y_2}(t,x)=\pi y_1(t,x), \\
\begin{aligned}
& y_1(t,1)=k y_2(t,1), && y_2(t,0)=0,  \\
& y_1(0,x)=y^0_1(x), && y_2(0,x)=y^0_2(x),
\end{aligned}
\end{dcases}
\quad t \in (0,+\infty), \, x \in (0,1).
\end{equation}
Indeed, for this system we can always construct a smooth initial data $(y^0_1,y^0_2)$ which is an eigenfunction of the operator associated with \eqref{syst ex} and whose corresponding eigenvalue $\sigma$ is a positive real number, which makes the system \eqref{syst ex} exponentially unstable.
This can be done as follows.
We take
$$y^0_1(x)=\frac{1}{\pi}\left(\sigma y^0_2(x)+ \px{y^0_2}(x)\right),$$
(so that the second equation in \eqref{syst ex} will always be satisfied) and
\begin{itemize}
\item
If $k<1+1/\pi$, then we take $\sigma=\pi\sqrt{1-\theta^2}$ and $y^0_2(x)=\sin(\theta \pi x)$, where $\theta \in (0,1)$ is any solution to the equation $\sqrt{1-\theta^2}+\theta \cot(\theta \pi)=k$.

\item
If $k=1+1/\pi$, then we take $\sigma=\pi$ and $y^0_2(x)=\pi x$.

\item
If $k>1+1/\pi$, then we take $\sigma=\pi\sqrt{1+\theta^2}$ and $y^0_2(x)=2\sinh(\theta \pi x)$, where $\theta>0$ is any solution to the equation $\sqrt{1+\theta^2}+\theta \coth(\theta \pi)=k$.
\end{itemize}

\end{remark}

The goal of this work is to establish a necessary and sufficient condition on the time $T$ for the system \eqref{syst} to be null controllable in time $T$ (resp. finite-time stabilizable with settling time $T$).

Let us now introduce some notations that will be used all along the rest of this article.
Let $\phi_1,\phi_2 \in C^{1,1}([0,1])$ be the increasing functions defined for every $x \in [0,1]$ by
\begin{equation}\label{def phii}
\phi_1(x)=\int_0^x \frac{1}{-\lambda_1(\xi)} \,d\xi,
\quad
\phi_2(x)=\int_0^x \frac{1}{\lambda_2(\xi)} \,d\xi.
\end{equation}
We then denote by
$$
T_1(\Lambda)
=\phi_1(1)
=\int_0^1 \frac{1}{-\lambda_1(\xi)} \, d\xi,
\quad
T_2(\Lambda)
=\phi_2(1)
=\int_0^1 \frac{1}{\lambda_2(\xi)} \, d\xi.
$$
Finally, we set
\begin{equation}\label{def Topt Tunif}
\Topt=\max\ens{T_1(\Lambda), T_2(\Lambda)},
\quad
\Tunif=T_1(\Lambda)+T_2(\Lambda).
\end{equation}
The naming of the notations in \eqref{def Topt Tunif} will be explained in Remark \ref{rem naming} below.

\subsection{Literature}

Boundary null controllability and stabilization of hyperbolic systems of balance laws have attracted numerous attention of both mathematicians and engineers during the last decades.
In the pioneering work \cite{Rus78}, the author established the null controllability of general $n\times n$ coupled linear hyperbolic systems of the form \eqref{syst} in a control time that is given by the sum of the two largest times from the states convecting in opposite directions (\cite[Theorem 3.2]{Rus78}).
It was also observed that this time can be shorten in some cases (\cite[Proposition 3.4]{Rus78}), and the problem to find the minimal control time for hyperbolic partial differential equations (PDEs) was then raised (\cite[Remark p. 656]{Rus78}).

For systems of linear conservation laws (i.e. when no internal coupling matrix is present in the system), this problem was completely solved few years later in \cite{Wec82}, where the minimal control time has been characterized in terms of the boundary coupling matrix, that is the matrix coupling the equations at the boundary on the uncontrolled side.
For systems of balance laws,  the story is far from over.
A first improvement of the control time of \cite{Rus78} was recently obtained in \cite{CN19} thanks to the introduction of some rank condition on the boundary coupling matrix.
However, this was first done for some generic internal coupling matrices or under rather stringent conditions (\cite[Theorem 1.1 and 1.5]{CN19}).
The same authors were then able to remove some of these restrictions in \cite{CN19-pre}.
For the present paper it is especially important to emphasize that the new time introduced in \cite{CN19,CN19-pre} is only shown to be sufficient for the null controllability in these works.
On the other hand, the minimal control time needed to achieve the exact controllability property (that is when we want to reach any final data and not only zero), was completely characterized in \cite[Theorem 1.9]{HO19} by a simple and calculable formula.
It is also pointed out that null and exact controllability are equivalent properties if the boundary coupling matrix has a full row rank.
For quasilinear systems, it has been shown in \cite[Theorem 3.2]{Li10} that the time of \cite{Rus78} yields the (local) exact controllability of such systems if the boundary coupling matrix has a full row rank in a neighborhood of the state zero.
For homogeneous quasilinear systems, a smaller control time was then obtained in \cite[Theorem 1.1]{Hu15}.

Concerning now the stabilization property, the first works seem \cite{GL84,Qin85} for the exponential stabilization of homogeneous quasilinear hyperbolic systems in a $C^1$ framework by using the method of characteristics.
To the best of our knowledge, the weakest sufficient condition using this technique can be found in \cite[Theorem 1.3, p. 173]{Li94}.
This condition was then improved in \cite[Theorem 2.3]{CBdAN08}  in a $H^2$ framework thanks to the construction of an explicit strict Lyapunov function.
In all the previous references, the feedback laws were static boundary output feedback laws (that is, depending only on the state values at the boundaries).
However, due to the locality of such kind of feedback laws, these two strategies may not be effective to deal with general systems of balance laws (\cite[Section 5.6]{BC16} and Remark \ref{rem counterexample}).
Another method was then used to address this problem, the backstepping method.
For PDEs, this method now consists in transforming our initial system into another system - called target system - for which the stabilization properties are simpler to study.
The transformation used is usually a Volterra transformation of the second kind.
One can refer to the tutorial book \cite{KS08} to design boundary feedback laws stabilizing systems modeled by various PDEs and to the introduction of \cite{CHOS21} for a complementary state of the art on this method.
This technique turned out to be a powerful tool to stabilize general coupled hyperbolic systems, moreover in finite time.
In \cite{CVKB13} the authors adapted this technique to obtain the first finite-time stabilization result for $2\times 2$ linear hyperbolic system.
This method was then developed, notably with a more careful choice of the target system, to treat $3 \times 3$ systems in \cite{HDM15} and then to treat general $n\times n$ systems in \cite{HDMVK16,HVDMK19}.
However, the control time obtained in these works was larger than the one in \cite{Rus78} and it was only shown in \cite{ADM16, CHO17} that we can stabilize with the same time as the one of \cite{Rus78}.
These works have recently been generalized to time-dependent systems in \cite{CHOS21}.
Finally, let us also mention the two recent works \cite{CN20-pre1, CN20-pre2} concerning the finite-time stabilization of homogeneous quasilinear systems, with the same control time as in \cite{CN19,CN19-pre}.

In spite of quite a number of contributions dealing with these two problems (controllability and stabilization), we see that there are no references concerning the optimality of the control time for systems of linear balance laws with spatial-varying internal coupling matrix, especially when null and exact controllability are not equivalent, so that the results in \cite{CVKB13, HO19} cannot be considered.
This is of course a nontrivial task and it requires the addition of new techniques as we shall see below.
The goal of this article is to fill this gap, at least for $2 \times 2$ systems.
We will provide an explicit formula of the minimal control time for any $2\times 2$ system of linear balance laws with spacial-varying internal coupling matrix.
We will see that one of the main differences between null and exact controllability is that such a critical time is sensitive to the behavior of the internal coupling matrix for the null controllability, whereas it is known to never be the case for the exact controllability (\cite{CVKB13, HO19}).

\subsection{Main result and comments}

The important quantity in the present work is the following:

\begin{definition}\label{def xf}
For $\epsilon>0$ and a function $f:(0,\epsilon) \longrightarrow \R$, we denote by
$$\Xm{\epsilon}{f}=
\begin{dcases}
\sup I_\epsilon(f) & \mbox{ if } I_\epsilon(f) \neq \emptyset, \\
0 & \mbox{ otherwise, }
\end{dcases}
$$
where $I_\epsilon(f)=\ens{\ell \in (0,\epsilon) \quad \middle| \quad f=0 \, \text{ a.e. in } (0,\ell)}$.
\end{definition}

The quantity $\Xm{\epsilon}{f}$ is the length of the largest interval of the form $(0,\ell)$ where the function $f$ vanishes.

\begin{example}\label{examples}
~
\begin{enumerate}[(E1)]
\item\label{ex 1}
The simplest example of function $f$ with $\Xm{\epsilon}{f}=\ell$ ($\ell \in [0,\epsilon]$) is obviously the step function
$$
f(x)
=
\begin{dcases}
0 & \mbox{ if } x\leq \ell, \\
1 & \mbox{ if } x>\ell.
\end{dcases}
$$

\item\label{ex 2}
If $f \in C^k([0,\epsilon))$ ($k \in \N$) and satisfies $f^{(k)}(0) \neq 0$, then $\Xm{\epsilon}{f}=0$.
In particular, if $f$ has an analytic extension in a neighborhood of $x=0$, then $\Xm{\epsilon}{f}=0$.

\item\label{ex 3}
An example of smooth function $f$ with $\Xm{\epsilon}{f}=0$ but that does not satisfy the previous conditions is
\begin{equation}\label{bump f}
f(x)
=
\begin{dcases}
0 & \mbox{ if } x \leq 0, \\
\exp\left(-\frac{1}{x}\right) & \mbox{ if } x>0.
\end{dcases}
\end{equation}

\end{enumerate}

\end{example}

The main result of this article is the following complete characterization of the controllability properties of the system \eqref{syst}:
\begin{theorem}\label{main thm}
Let $T>0$.
\begin{enumerate}[(i)]
\item\label{item NC}
If the system \eqref{syst} is null controllable in time $T$, then necessarily
\begin{equation}\label{min time syst}
T \geq \max\ens{\Topt, \quad \int_{\Xm{\xmax}{c}}^1 \left(\frac{1}{-\lambda_1(\xi)}+\frac{1}{\lambda_2(\xi)}\right) \, d\xi},
\end{equation}
where $\xmax \in (0,1)$ is the unique solution to $\phi_1(\xmax)+\phi_2(\xmax)=T_2(\Lambda)$ ($=\phi_2(1)$).

\item
If the time $T$ satisfies \eqref{min time syst}, then the system \eqref{syst} is finite-time stabilizable with settling time $T$.
\end{enumerate}
\end{theorem}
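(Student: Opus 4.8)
The plan is to prove the two implications by rather different means, with a common first step that straightens the characteristics. Using $\phi_1,\phi_2$ from \eqref{def phii} as new space variables (one adapted to each family of characteristics) I would first normalize the transport speeds, so that all the travel-time integrals become plain lengths; after this reduction the relation $\phi_1(\xmax)+\phi_2(\xmax)=T_2(\Lambda)$ simply says that $\xmax$ is the point from which a ``$y_1$ then $y_2$'' round trip takes exactly the flush-out time $T_2(\Lambda)$, and the second quantity in \eqref{min time syst} becomes $\Tunif-(\phi_1(\Xm{\xmax}{c})+\phi_2(\Xm{\xmax}{c}))$. In these coordinates the solution is given explicitly along characteristics, the internal coupling being resolved as a Volterra (Neumann) series, which is what makes both the backstepping construction and the convolution analysis below transparent.

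For the sufficiency (part (ii)) I would use the backstepping method. I would look for a Volterra transformation of the second kind mapping \eqref{syst} onto a target system that is visibly finite-time stable with the prescribed settling time. The natural target is not the fully decoupled one (which would only give the Russell time $\Tunif$) but a cascade in which the controlled component is transported to $0$ in time $T_1(\Lambda)$ and the second component is then flushed through $x=0$, retaining exactly the portion of the $c$-coupling supported on $(\Xm{\xmax}{c},1)$. The kernels solve a hyperbolic boundary value problem on a triangular domain, which I would solve by successive approximations in an $L^\infty$ setting; the feedback $f_1,f_2$ is then read off from the transformation at $x=1$. The point is that $c=0$ on $(0,\Xm{\xmax}{c})$ forces a support property of the kernels, and this is precisely what turns the generic settling time $\Tunif$ into $\Tunif-(\phi_1(\Xm{\xmax}{c})+\phi_2(\Xm{\xmax}{c}))$.

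The necessity (part (i)) is the heart of the matter. The bound $T\geq\Topt$ is a finite-speed-of-propagation statement: an initial datum localized near the point reached last by the control cannot be steered to $0$ before $\Topt$, which follows from the explicit characteristics. For the $c$-dependent bound I would again pass through a backstepping change of variables, this time used only to remove the couplings that do not influence the critical time, bringing the system to a normal form in which the effect of the control $u$ on $y_2(T,\cdot)$ is an honest time-convolution whose kernel is an iterated convolution of the characteristic propagators with the coefficient $c$. Equivalently, by duality I would translate null controllability in time $T$ into an observability inequality for the adjoint system and read the observation at the controlled boundary as such a convolution. Assuming controllability in some time $T$ below the bound, the corresponding observation vanishes on a time interval whose length I can compute; by the Titchmarsh convolution theorem the left endpoint of the support of the convolution is the sum of the left endpoints of its factors, so this vanishing forces $c$ to vanish on an interval $(0,\ell')$ with $\ell'>\Xm{\xmax}{c}$. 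This contradicts the maximality built into the definition of $\Xm{\xmax}{c}$, and the threshold value of $T$ at which the contradiction disappears is exactly the bound in \eqref{min time syst}.

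The step I expect to be the main obstacle is this last one. Titchmarsh's theorem applies to genuine convolutions, so the normalization and the backstepping bookkeeping must be carried out carefully enough that the coupled characteristic integrals really become convolutions in a single time variable, and one has to verify that the other coefficients $a,b,d$ (and the part of $c$ beyond $\xmax$) do not create support at earlier times and thereby spoil the identification of the left endpoint. Moreover, since $a,b,c,d$ are only $L^\infty$, the whole support analysis has to be phrased for $L^1_{\loc}$ functions rather than pointwise, and the passage between ``the final state is uncontrollable'' and ``a convolution vanishes on an interval'' must be made rigorous through the duality/observability estimate. Pinning down that the relevant cut-off is precisely $\xmax$, via $\phi_1(\xmax)+\phi_2(\xmax)=T_2(\Lambda)$, is the delicate geometric input that marries the Titchmarsh support count with the travel times.
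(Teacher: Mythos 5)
Your overall strategy---reduce to a normal form by backstepping and then apply the Titchmarsh convolution theorem to the resulting time-convolution---is exactly the paper's, so the architecture is sound. But there are concrete gaps. The most serious is your claim that $T\geq\Topt$ is ``a finite-speed-of-propagation statement''. That is true for the constraint $T\geq T_1(\Lambda)$ (which is in any case already implied by the second term of \eqref{min time syst}, since $\Xm{\xmax}{c}\leq\xmax$), but it fails for the binding constraint $T\geq T_2(\Lambda)$: when $c\not\equiv 0$ the control \emph{does} influence $y_2(T,\cdot)$ on the uncontrolled region $\ens{x : \phi_2(x)>T}$ through the path ``travel left along $\lambda_1$, transfer via $c$, travel right along $\lambda_2$'', and for $x$ near $1$ this path can take much less time than $T_2(\Lambda)$. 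So you cannot conclude by support considerations alone; the paper instead shows that the control's contribution on that region is the range of an integral operator with bounded kernel (hence contained in $L^\infty$, alternatively compact), which cannot absorb an arbitrary $L^2$ initial datum transported there. Without such a functional-analytic input your proof of the $\Topt$ bound does not close---and this is precisely the part of the max in \eqref{min time syst} that is not subsumed by the other term.

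Two further points that you flag as ``obstacles'' are in fact the substance of the proof and are not supplied. First, to extract information about $c$ from Titchmarsh you must exhibit a specific initial datum whose contribution to the boundary trace of the controlled component is nonzero immediately after time $T-T_1(\Lambda)$; only this bounds the vanishing interval of the factor $\alpha$ from above and converts the Titchmarsh support identity into a lower bound on the vanishing interval of the convolution kernel. (Saying ``the observation vanishes for all data'' is not enough; one must pick a witness.) Second, the convolution kernel is not $c$ itself but the trace $g(x)=-k_{21}(x,0)\lambda_1(0)$ of the backstepping kernel (equivalently, an iterated-integral series in $c$), and identifying its vanishing interval with that of $c$ requires writing $k_{21}(x,0)$ as a Volterra transformation of the second kind applied to $\tilde{c}\circ\ssin$ along the characteristics of the kernel equations and invoking injectivity of such transformations; the relation $\phi_2(\Xm{1}{g})=\phi_1(\Xm{\xmax}{c})+\phi_2(\Xm{\xmax}{c})$, and with it the appearance of $\xmax$, only comes out of that computation. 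As written, your proposal correctly predicts the answer and the tools, but leaves these steps---one of which (the $T_2(\Lambda)$ bound) would fail as argued---unproved.
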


Note in particular that the system \eqref{syst} is then null controllable in time $T$ if, and only if, it is finite-time stabilizable with settling time $T$.

\begin{example}
For $c$ satisfying the properties in \ref{ex 2} or given by the function in \ref{ex 3}, this result shows that the time $\Tunif$ cannot be improved.
This is not trivial, especially when $c$ is given by the function in \ref{ex 3}.
\end{example}

\begin{remark}\label{rem cst speeds}
When $\lambda_1, \lambda_2$ do not depend on space, the condition \eqref{min time syst}, in the situation $\Topt \leq T<\Tunif$, simply becomes
$$c=0 \quad \mbox{ in } \left(0, 1-\frac{T}{\Tunif}\right).$$
In particular, we see that we can possibly obtain any intermediate time between $\Topt$ and $\Tunif$.
Moreover, note that the value $\Topt$ is reachable even when $c$ is not identically equal to zero.
\end{remark}

\begin{remark}\label{rem naming}
As we shall see in the proof below, the most difficult part of this result is the necessary condition, that is the item \ref{item NC}.
It is also thanks to this part that we can call the time on the right-hand side of \eqref{min time syst} the minimal control time.
It is sometimes find in the literature that the time $\Tunif$ is ``the theoretical lower bound for control time'' or ``the optimal time''.
However, the failure of the controllability before this time is never proved in these works (and, in fact, it cannot be in general), which brings some confusion to our point of view.
This is why we carefully introduced a different naming and use the notations
\begin{itemize}
\item
$\Tunif$ as ``uniform time'', for the smallest time after which all the systems of the form \eqref{syst} are null controllable,
\item
$\Topt$ as ``optimal time'', for the smallest control time that can be obtained among all the possible control times for the systems of the form \eqref{syst}.
\end{itemize}
\end{remark}

\begin{remark}
Let us comment other possibilities for the boundary conditions at $x=0$:
\begin{enumerate}[(i)]
\item
When the boundary condition $y_2(t,0)=0$ is replaced by $y_2(t,0)=qy_1(t,0)$ with boundary coupling ``matrix'' $q \neq 0$, the result \cite[Theorem 3.2]{CVKB13} shows that the time $\Tunif$ is the minimal control time (more precisely, it is shown that the system \eqref{syst} with such a boundary condition is equivalent to the same system with no internal coupling matrix, for which $\Tunif$ is clearly minimal).
However, when $q=0$, we see that our time is smaller than the one obtained in this reference.

\item
When a second control is applied at the boundary $x=0$, i.e. the boundary condition $y_2(t,0)=0$ is replaced by $y_2(t,0)=v(t)$ with $v \in L^2_{\loc}(0,+\infty)$ a second control at our disposal, then the time $\Topt$ is the minimal control time.
The null controllability for $T \geq \Topt$ can be shown using for instance the well-known constructive method developed in \cite[Theorem 3.1]{Li10}.
On the other hand, the failure of the null controllability for $T<\Topt$ follows from the backstepping method (by means of Volterra transformation of the second kind) and a simple adaptation of Lemma \ref{Topt best} below.
\end{enumerate}
Therefore, combining the previous results of the literature with the new results of the present paper, we see that all the following possibilities for the boundary conditions have been handled:
\begin{gather*}
y_1(t,1)=p y_2(t,1)+ru(t), \quad y_2(t,0)=qy_1(t,0)+sv(t), \\
p,q,r,s \in \R \text{ with } (r,s) \neq (0,0).
\end{gather*}
\end{remark}

The rest of this article is organized as follows.
In Section \ref{sect backstepping}, we use the backstepping method to show that our initial system \eqref{syst} is equivalent to a canonical system from a controllability point of view.
In Section \ref{sect can} we use the Titchmarsh convolution theorem to  completely characterize the minimal control time for this canonical system.
In Section \ref{sect proof main} we characterize this time in terms of the parameters of the initial system.
Finally, in Section \ref{sect ext} we discuss possible extensions to systems with more than two equations.

\section{Reduction to a canonical form}\label{sect backstepping}

In this section, we perform some changes of unknown to transform our initial system \eqref{syst} into a new system whose controllability properties will be simpler to study, this is the so-called backstepping method for PDEs.
The content of section is quite standard by now, we refer for instance to \cite[Section 3.2]{CVKB13} for more details on the computations below.

First of all, we remove the diagonal terms in the system \eqref{syst}.
Using the invertible spatial transformation (seen as an operator from $L^2(0,1)^2$ onto itself)
\begin{equation}\label{transfo to tilde}
\begin{dcases}
\tilde{y}_1(t,x)=e_1(x)y_1(t,x), \\
\tilde{y}_2(t,x)=e_2(x)y_2(t,x),
\end{dcases}
\end{equation}
with
\begin{equation}\label{def ei}
e_1(x)=\exp\left(-\int_0^x \frac{a(\xi)}{\lambda_1(\xi)} \, d\xi\right),
\quad
e_2(x)=\exp\left(-\int_0^x \frac{d(\xi)}{\lambda_2(\xi)} \, d\xi\right),
\end{equation}
we easily see that the system \eqref{syst} is null controllable in time $T$ (resp. finite-time stabilizable with settling time $T$) if, and only if, so is the system
\begin{equation}\label{syst -diag}
\begin{dcases}
\pt{\tilde{y}_1}(t,x)+\lambda_1(x) \px{\tilde{y}_1}(t,x)=\tilde{b}(x)\tilde{y}_2(t,x), \\
\pt{\tilde{y}_2}(t,x)+\lambda_2(x) \px{\tilde{y}_2}(t,x)=\tilde{c}(x)\tilde{y}_1(t,x), \\
\begin{aligned}
& \tilde{y}_1(t,1)=\tilde{u}(t), && \tilde{y}_2(t,0)=0,  \\
& \tilde{y}_1(0,x)=\tilde{y}_1^0(x), && \tilde{y}_2(0,x)=\tilde{y}_2^0(x),
\end{aligned}
\end{dcases}
\quad t \in (0,+\infty), \, x \in (0,1),
\end{equation}
where
\begin{equation}\label{def ct}
\tilde{b}(x)=b(x) \frac{e_1(x)}{e_2(x)}
, \quad
\tilde{c}(x)=c(x) \frac{e_2(x)}{e_1(x)}.
\end{equation}

Let us now remove the coupling term on the first equation of \eqref{syst -diag} thanks to a second transformation.
Set
$$\Tau=\ens{(x,\xi) \in (0,1)\times(0,1) \quad \middle| \quad x>\xi}.$$

\noindent
Let $k_{11},k_{12},k_{21},k_{22} \in L^{\infty}(\Tau)$.
Using the spatial transformation
\begin{equation}\label{transfo to hat}
\begin{dcases}
\hat{y}_1(t,x)=\tilde{y}_1(t,x)-\int_0^x \left(k_{11}(x,\xi)\tilde{y}_1(t,\xi) +k_{12}(x,\xi)\tilde{y}_2(t,\xi)\right)\, d\xi, \\
\hat{y}_2(t,x)=\tilde{y}_2(t,x)-\int_0^x \left(k_{21}(x,\xi)\tilde{y}_1(t,\xi) +k_{22}(x,\xi)\tilde{y}_2(t,\xi)\right)\, d\xi,
\end{dcases}
\end{equation}
which is invertible since it is a Volterra transformation of the second kind (see e.g. \cite[Chapter 2, Theorem 5]{Hoc73}), we see that the system \eqref{syst -diag} is null controllable in time $T$ (resp. finite-time stabilizable with settling time $T$) if, and only if, so is the system
\begin{equation}\label{can syst}
\begin{dcases}
\pt{\hat{y}_1}(t,x)+\lambda_1(x) \px{\hat{y}_1}(t,x)=0, \\
\pt{\hat{y}_2}(t,x)+\lambda_2(x) \px{\hat{y}_2}(t,x)=g(x) \hat{y}_1(t,0), \\
\begin{aligned}
& \hat{y}_1(t,1)=\hat{u}(t), && \hat{y}_2(t,0)=0,  \\
& \hat{y}_1(0,x)=\hat{y}_1^0(x), && \hat{y}_2(0,x)=\hat{y}_2^0(x),
\end{aligned}
\end{dcases}
\quad t \in (0,+\infty), \, x \in (0,1),
\end{equation}
with $g$ given by
\begin{equation}\label{def g}
g(x)=- k_{21}(x,0)\lambda_1(0),
\end{equation}
provided that the kernels $k_{11}, k_{12}, k_{21}, k_{22}$ satisfy the so-called kernel equations:
\begin{equation}\label{kern equ 1}
\begin{dcases}
\lambda_1(x)\px{k_{11}}(x,\xi)+\pxi{k_{11}}(x,\xi)\lambda_1(\xi)
+k_{11}(x,\xi)\pxi{\lambda_1}(\xi)
+k_{12}(x,\xi)\tilde{c}(\xi)=0, \\
\lambda_1(x)\px{k_{12}}(x,\xi)+\pxi{k_{12}}(x,\xi)\lambda_2(\xi)
+k_{11}(x,\xi)\tilde{b}(\xi)
+k_{12}(x,\xi)\pxi{\lambda_2}(\xi)=0, \\
k_{11}(x,0)=0, \\
k_{12}(x,x)=\frac{\tilde{b}(x)}{\lambda_1(x)-\lambda_2(x)},
\end{dcases}
\quad (x,\xi) \in \Tau,
\end{equation}
and
\begin{equation}\label{kern equ 2}
\begin{dcases}
\lambda_2(x)\px{k_{21}}(x,\xi)+\pxi{k_{21}}(x,\xi)\lambda_1(\xi)
+k_{21}(x,\xi)\pxi{\lambda_1}(\xi)
+k_{22}(x,\xi)\tilde{c}(\xi)=0, \\
\lambda_2(x)\px{k_{22}}(x,\xi)+\pxi{k_{22}}(x,\xi)\lambda_2(\xi)
+k_{21}(x,\xi)\tilde{b}(\xi)
+k_{22}(x,\xi)\pxi{\lambda_2}(\xi)=0, \\
k_{21}(x,x)=\frac{\tilde{c}(x)}{\lambda_2(x)-\lambda_1(x)},
\end{dcases}
\quad (x,\xi) \in \Tau.
\end{equation}
Note that \eqref{kern equ 1} and \eqref{kern equ 2} are not coupled.

From \cite[Theorem A.1]{CVKB13}, we know that the kernel equations \eqref{kern equ 1}-\eqref{kern equ 2} have a solution.
More precisely, we have the following result:
\begin{theorem}\label{thm kern}
For every $k^0 \in L^{\infty}(0,1)$, there exists a unique solution $(k_{11}, k_{12}, k_{21}, k_{22}) \in L^{\infty}(\Tau)^4$ to the kernel equations \eqref{kern equ 1}-\eqref{kern equ 2} with
$$k_{22}(x,0)=k^0(x), \quad x \in (0,1).$$
\end{theorem}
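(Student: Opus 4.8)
The plan is to solve the two subsystems \eqref{kern equ 1} and \eqref{kern equ 2} separately, since, as already noted, they are decoupled, using the classical combination of the method of characteristics with a fixed-point argument; this is exactly the scheme of \cite[Appendix A]{CVKB13}, the only new feature being the prescribed trace $k_{22}(x,0)=k^0(x)$. The left-hand side of each kernel equation is a transport operator, and since $\lambda_1,\lambda_2\in C^{0,1}([0,1])$ the associated characteristic ODEs have unique Lipschitz solutions by Cauchy--Lipschitz. Along these curves the functions $\phi_1,\phi_2$ of \eqref{def phii} evolve at unit rate in the arc-length parameter, so that $\phi_1(x)-\phi_1(\xi)$ is constant along the characteristics of the $k_{11}$ equation, $\phi_1(x)+\phi_2(\xi)$ along those of $k_{12}$, $\phi_2(x)+\phi_1(\xi)$ along those of $k_{21}$, and $\phi_2(x)-\phi_2(\xi)$ along those of $k_{22}$. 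Combining these invariants with the signs $\lambda_1<0<\lambda_2$, the first thing to verify is that, through every point of $\Tau$, the characteristics of $k_{11}$ and of $k_{22}$ reach the boundary $\xi=0$ while staying in $\overline{\Tau}$, whereas those of $k_{12}$ and $k_{21}$ reach the diagonal $\xi=x$. This is precisely why the data $k_{11}(x,0)=0$, $k_{12}(x,x)$, $k_{21}(x,x)$ and the free trace $k_{22}(x,0)=k^0(x)$ form the correct and complete set of boundary conditions: each kernel is assigned its datum exactly on the boundary segment hit by its characteristics, and the availability of $k^0$ reflects the fact that $\xi=0$ is where the $k_{22}$-characteristics land.

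Integrating each transport equation along its characteristic, from the boundary point supplied above up to a generic point of $\Tau$, I would then rewrite \eqref{kern equ 1}-\eqref{kern equ 2} as an equivalent system of integral equations $K=\mathcal{F}(K)$ for $K=(k_{11},k_{12},k_{21},k_{22})\in L^{\infty}(\Tau)^4$, where $\mathcal{F}$ collects the boundary data together with the integrals of the zero-order coupling terms (those containing $\tilde{b},\tilde{c}$ and the derivatives of $\lambda_1,\lambda_2$, all in $L^\infty$). Because the coefficients are merely $L^{\infty}$, the appropriate notion of solution is the broad one, defined through these integral equations, and one checks that fixed points of $\mathcal{F}$ and broad solutions of the kernel equations with the prescribed traces coincide.

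Finally, since the characteristics have uniformly bounded length and $\mathcal{F}$ integrates only along them, the map $\mathcal{F}$ has a Volterra (triangular) structure, so that its iterates satisfy an estimate of the form $\|\mathcal{F}^n K-\mathcal{F}^n K'\|_{L^{\infty}}\le \frac{(CM)^n}{n!}\|K-K'\|_{L^{\infty}}$, where $C$ bounds the coefficients and $M$ the maximal characteristic length. Hence some iterate $\mathcal{F}^n$ is a contraction on $L^{\infty}(\Tau)^4$, and the Banach fixed-point theorem yields a unique $K$, which is the desired unique solution; uniqueness for a given $k^0$ is automatic from this argument.

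I expect the main obstacle to be the geometric bookkeeping of the first step: verifying, with merely Lipschitz speeds, that every characteristic inside the triangle $\Tau$ reaches its intended boundary segment, remains in $\overline{\Tau}$, and depends measurably on the base point so that $\mathcal{F}$ is well-defined on $L^{\infty}(\Tau)^4$. Once this and the broad-solution formulation are in place, the reduction to integral equations and the fixed-point step are routine, the Volterra structure making the contraction estimate essentially automatic.
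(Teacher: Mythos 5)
Your proposal is correct and coincides with the argument the paper relies on: the paper does not reprove this theorem but invokes \cite[Theorem A.1]{CVKB13}, whose proof is exactly the characteristics-plus-successive-approximations scheme you describe, and merely remarks that it survives with $L^{\infty}$ coefficients (broad solutions) and with the additional free trace $k_{22}(x,0)=k^0(x)$ on the boundary segment $\xi=0$ where the $k_{22}$-characteristics land. Your identification of the invariants, of which boundary piece each characteristic family reaches, and of the Volterra structure in the variable $x$ giving the factorial contraction estimate is accurate.
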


In the aforementioned reference this result is stated in a $C^0$ framework (assuming that $a,b,c,d \in C^0([0,1])$) but its proof readily shows that it is valid in $L^{\infty}$ as well.
As before, the notion of solution is to be understood in the sense of solution along the characteristics.
The boundary terms such as $k_{21}(x,0)$, which defines $g$ (see \eqref{def g}), or $k_{11}(1,\xi), k_{12}(1,\xi)$, that will appear shortly below in our feedback law (see \eqref{def feedback}), etc. are also understood in this sense.
We refer for instance to the formula \eqref{def trace k21} below for the precise meaning of $k_{21}(x,0)$.

\section{Study of the canonical system}\label{sect can}

We call the system \eqref{can syst} the ``control canonical form of the system \eqref{syst}'' or ``canonical system'' in short, by analogy with \cite{Bru70,Rus78-JMAA} and since we will see in this section that we are able to directly read its controllability properties (a task that seems impossible on the initial system \eqref{syst}).

The goal of this section is to establish the following result:

\begin{theorem}\label{thm can syst}
Let $T>0$ and $g \in L^{\infty}(0,1)$.
\begin{enumerate}[(i)]
\item\label{item NC can syst}
If the system \eqref{can syst} is null controllable in time $T$, then necessarily
\begin{equation}\label{min time can syst}
T \geq \max\ens{T_1(\Lambda)+\int_{\Xm{1}{g}}^1 \frac{1}{\lambda_2(\xi)} \,d\xi, \quad T_2(\Lambda)}.
\end{equation}

\item\label{item SC can syst}
If the time $T$ satisfies \eqref{min time can syst}, then the system \eqref{can syst} is finite-time stable with settling time $T$.
\end{enumerate}
\end{theorem}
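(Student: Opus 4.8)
The plan is to solve the canonical system \eqref{can syst} explicitly along its characteristics and to reduce the final-state conditions $\hat{y}_1(T,\cdot)=\hat{y}_2(T,\cdot)=0$ to a single convolution equation, on which the Titchmarsh convolution theorem can be brought to bear. Since the first equation is a pure transport at speed $\lambda_1<0$ controlled at $x=1$, its trace $h(t):=\hat{y}_1(t,0)$ satisfies $h(t)=\hat{u}(t-T_1(\Lambda))$ for $t>T_1(\Lambda)$, while on $(0,T_1(\Lambda))$ it is a free function determined by $\hat{y}_1^0$ (and $\hat{y}_1(T,\cdot)=0$ just forces $\hat{u}=0$ on $(T-T_1(\Lambda),T)$, a condition on a disjoint time window that I treat separately). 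Integrating the second equation along its characteristics and performing the change of variable $p=\phi_2(x)$, $r=\phi_2(\sigma)$ together with the time reversal $H(q):=h(T-q)$, the condition $\hat{y}_2(T,\cdot)=0$ becomes the genuine convolution identity
\begin{equation*}
(\tilde{g}*H)(p)=-\Phi(p),\qquad p\in(0,T_2(\Lambda)),
\end{equation*}
where $\tilde{g}(r)=g(\phi_2^{-1}(r))$, the profile $\Phi$ encodes $\hat{y}_2^0$ and is supported in $(T,T_2(\Lambda))$, and $H$ splits into a controlled part on $(0,T-T_1(\Lambda))$ (coming from $\hat{u}$) and an uncontrolled part on $(T-T_1(\Lambda),T)$ (coming from $\hat{y}_1^0$). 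The key structural fact is that $\inf\operatorname{supp}\tilde{g}=\phi_2(\Xm{1}{g})$.

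For the sufficiency (item \ref{item SC can syst}) I would simply take $\hat{u}=0$, so that $H$ is supported in $(T-T_1(\Lambda),T)$ and $\Phi\equiv0$ on $(0,T_2(\Lambda))$ as soon as $T\geq T_2(\Lambda)$. Using $\inf\operatorname{supp}(\tilde{g}*H)=\inf\operatorname{supp}\tilde{g}+\inf\operatorname{supp}H\geq \phi_2(\Xm{1}{g})+(T-T_1(\Lambda))$, the right-hand side is $\geq T_2(\Lambda)$ precisely when $T\geq T_1(\Lambda)+T_2(\Lambda)-\phi_2(\Xm{1}{g})=T_1(\Lambda)+\int_{\Xm{1}{g}}^1 \frac{d\xi}{\lambda_2(\xi)}$, so the convolution vanishes on $(0,T_2(\Lambda))$ and $\hat{y}_2(T,\cdot)=0$; the condition $T\geq T_2(\Lambda)$ also gives $\hat{y}_1(T,\cdot)=0$. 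This is a pure support computation and presents no difficulty; note it already shows these two thresholds are sharp for the $\hat{u}=0$ problem.

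The heart of the matter is the necessity (item \ref{item NC can syst}), and I expect this to be the main obstacle. I would establish the two lower bounds by two \emph{orthogonal} choices of adversarial data. First, for $T\geq T_1(\Lambda)+T_2(\Lambda)-\phi_2(\Xm{1}{g})$, take $\hat{y}_2^0=0$; null controllability then yields, for every $\hat{y}_1^0$, a control for which $\tilde{g}*H=0$ on $(0,T_2(\Lambda))$. The Titchmarsh convolution theorem forces $\inf\operatorname{supp}H\geq T_2(\Lambda)-\phi_2(\Xm{1}{g})$, i.e. $H=0$ on $(0,T_2(\Lambda)-\phi_2(\Xm{1}{g}))$. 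If $T<T_1(\Lambda)+T_2(\Lambda)-\phi_2(\Xm{1}{g})$ this forced-zero window protrudes past $T-T_1(\Lambda)$ into the uncontrolled part of $H$; choosing $\hat{y}_1^0$ so that this part is nonzero there yields a contradiction. Second, for $T\geq T_2(\Lambda)$, take instead $\hat{y}_1^0=0$, so that the map sending the control to $\hat{y}_2(T,\cdot)$ restricted to $\{x:\phi_2(x)>T\}$ is, in the variable $p$, the convolution operator $H_c\mapsto(\tilde{g}*H_c)|_{(T,T_2(\Lambda))}$; its kernel $(p,q)\mapsto\tilde{g}(p-q)$ lies in $L^2$, so the operator is Hilbert--Schmidt, hence compact and not onto the infinite-dimensional space $L^2(T,T_2(\Lambda))$ when $T<T_2(\Lambda)$. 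Choosing $\hat{y}_2^0$ so that the corresponding $\Phi$ falls outside this range produces data that cannot be steered to zero.

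The delicate points, and where I would spend the most care, are: checking that the $\phi_2$-change of variable genuinely turns the source term into a convolution despite the space-dependent speeds (this is exactly what linearizes the characteristics); the bookkeeping of the controlled versus uncontrolled time windows of $H$ and the identification $\inf\operatorname{supp}\tilde{g}=\phi_2(\Xm{1}{g})$, which is what makes $\Xm{1}{g}$ enter the final formula through Titchmarsh; and, for the bound $T\geq T_2(\Lambda)$, the passage from ``the convolution operator is not surjective'' to a concrete uncontrollable $\hat{y}_2^0$, using that exact reachability (not merely approximate) is required. The case $T<T_1(\Lambda)$ is trivial, since then $\hat{y}_1(T,\cdot)=0$ already fails, so I may assume $T\geq T_1(\Lambda)$ throughout, which is in any case implied by the first bound.
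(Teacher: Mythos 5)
Your proposal is correct and follows essentially the same route as the paper: explicit solution along the characteristics, the observation that the autonomy of the speeds turns the source term into a genuine convolution (your change of variable $p=\phi_2(x)$ plays the role of the paper's reparametrization $\chi_2(s;t,x)=\tilde{\chi}_2(s-t;x)$), the Titchmarsh convolution theorem with an adversarial $\hat{y}_1^0$ for the bound $T\geq T_1(\Lambda)+\int_{\Xm{1}{g}}^1 \frac{1}{\lambda_2(\xi)}\,d\xi$, a compactness/non-surjectivity argument for $T\geq T_2(\Lambda)$, and the easy support inequality with $\hat{u}=0$ for sufficiency. The only difference is organizational: the paper first rules out $T<T_2(\Lambda)$ in a separate lemma (so the convolution identity it then studies contains no contribution from $\hat{y}_2^0$), whereas you carry that contribution as the term $\Phi$ inside a single identity.
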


Let us emphasize once again that the difficult point is the first item.

\begin{remark}
Since $\hat{u}=0$ stabilizes the canonical system \eqref{can syst} by \ref{item SC can syst} of Theorem \ref{thm can syst}, we see from the formula \eqref{transfo to tilde} and \eqref{transfo to hat} that our feedback for the system \eqref{syst} is then
\begin{equation}\label{def feedback}
u(t)=\int_0^1 \frac{k_{11}(1,\xi)e_1(\xi)}{e_1(1)} y_1(t,\xi)\,d\xi
+\int_0^1 \frac{k_{12}(1,\xi)e_2(\xi)}{e_1(1)} y_2(t,\xi)\,d\xi.
\end{equation}
Note that $u \in C^0([0,+\infty))$.
\end{remark}

\subsection{The characteristics}\label{sect caract}

Before proving Theorem \ref{thm can syst} we need to introduce the characteristic curves associated with the system \eqref{can syst} and recall some useful properties.

First of all, it is convenient to extend $\lambda_1,\lambda_2$ to functions of $\R$ (still denoted by the same) such that $\lambda_1,\lambda_2 \in C^{0,1}(\R)$ and 
\begin{equation}\label{hyp speeds bis}
\lambda_1(x) \leq -\epsilon<0<\epsilon<\lambda_2(x), \quad \forall x \in \R,
\end{equation}
for some $\epsilon>0$ small enough.
Since all the results of the present paper depend only on the values of $\lambda_1,\lambda_2$ in $[0,1]$, they do not depend on such an extension.

In what follows, $i \in \ens{1,2}$.
Let $\chi_i$ be the flow associated with $\lambda_i$, i.e. for every $(t,x) \in \R \times \R$, the function $s \mapsto \chi_i(s;t,x)$ is the solution to the ODE
\begin{equation}\label{ODE xi}
\begin{dcases}
\pas{\chi_i}(s;t,x)=\lambda_i(\chi_i(s;t,x)), \quad \forall s \in \R, \\
\chi_i(t;t,x)=x.
\end{dcases}
\end{equation}
The existence and uniqueness of a (global) solution to the ODE \eqref{ODE xi} follows from the (global) Cauchy-Lipschitz theorem (see e.g. \cite[Theorem II.1.1]{Har02}).
The uniqueness also yields the important group property
\begin{equation}\label{chi invariance}
\chi_i\left(\sigma;s,\chi_i(s;t,x)\right)=\chi_i(\sigma;t,x), \quad \forall \sigma,s \in \R.
\end{equation}
By classical regularity results on ODEs (see e.g. \cite[Theorem V.3.1]{Har02}), we have $\chi_i \in C^1(\R^3)$ and
\begin{equation}\label{chi increases}
\pt{\chi_i}(s;t,x)=-\lambda_i(\chi_i(s;t,x)), \quad \px{\chi_i}(s;t,x)=\frac{\lambda_i(\chi_i(s;t,x))}{\lambda_i(x)}.
\end{equation}

Let us now introduce the entry and exit times $\ssin_i(t,x),\ssout_i(t,x) \in \R$ of the flow $\chi_i(\cdot;t,x)$ inside the domain $[0,1]$, i.e. the respective unique solutions to
$$
\begin{dcases}
\chi_1(\ssin_1(t,x);t,x)=1, \quad \chi_1(\ssout_1(t,x); t, x)=0, \\
\chi_2(\ssin_2(t,x);t,x)=0, \quad \chi_2(\ssout_2(t,x);t,x)=1.
\end{dcases}
$$
Their existence and uniqueness are guaranteed by the condition \eqref{hyp speeds bis}.
It readily follows from \eqref{chi invariance} and the uniqueness of $\ssin_i$ that
\begin{equation}\label{invariance}
\ssin_i\left(s,\chi_i(s;t,x)\right)=\ssin_i(t,x), \quad \forall s \in \R.
\end{equation}

\noindent
By the implicit function theorem we have $\ssin_i \in C^1(\R^2)$ with (using \eqref{chi increases})
\begin{equation}\label{ssin monotonicity}
\begin{dcases}
\pt{\ssin_1}(t,x)>0, \quad \px{\ssin_1}(t,x)>0, \\
\pt{\ssin_2}(t,x)>0, \quad \px{\ssin_2}(t,x)<0.
\end{dcases}
\end{equation}

\noindent
Combined with the group property \eqref{invariance}, this yields the following inverse formula for every $s,t \in \R$:
\begin{equation}\label{inv sin}
\begin{dcases}
s<\ssout_1(t,1) \quad \Longleftrightarrow \quad \ssin_1(s,0)<t, \\
s<\ssout_2(t,0) \quad \Longleftrightarrow \quad \ssin_2(s,1)<t.
\end{dcases}
\end{equation}

Finally, since $\lambda_i$ does not depend on time, we have an explicit formula for the inverse function $\theta \mapsto \chi_i^{-1}(\theta;t,x)$.
Indeed, it solves
$$
\begin{dcases}
\frac{\partial (\chi_i^{-1})}{\partial \theta}(\theta;t,x)=\frac{1}{\pas{\chi_i}\left(\chi_i^{-1}(\theta;t,x);t,x\right)}=\frac{1}{\lambda_i(\theta)}, \quad \forall \theta \in \R,\\
\chi_i^{-1}(x;t,x)=t,
\end{dcases}
$$
which gives
\begin{equation}\label{chi inv}
\chi_i^{-1}(\theta;t,x)=t+\int_x^\theta \frac{1}{\lambda_i(\xi)} \, d\xi.
\end{equation}

\noindent
This also yields an explicit formula for $\ssin_1,\ssin_2$ and $\ssout_1,\ssout_2$ and, in particular,
$$T_1(\Lambda)=\ssout_1(0,1), \quad T_2(\Lambda)=\ssout_2(0,0).$$

\subsection{Proof of Theorem \ref{thm can syst}}

First of all, the solution of the canonical system \eqref{can syst} is explicitly given by:
\begin{equation}\label{explicit formula 1}
\hat{y}_1(t,x)=
\begin{dcases}
\hat{y}^0_1\left(\chi_1(0;t,x)\right)
& \mbox{ if } \ssin_1(t,x)<0, \\
\hat{u}\left(\ssin_1(t,x)\right)
& \mbox{ if } \ssin_1(t,x)>0,
\end{dcases}
\end{equation}
and
\begin{equation}\label{explicit formula 2}
\hat{y}_2(t,x)=
\begin{dcases}
\hat{y}^0_2\left(\chi_2(0;t,x)\right)
+\int_0^t g\left(\chi_2(s;t,x)\right) \hat{y}_1(s,0) \, ds
& \mbox{ if } \ssin_2(t,x)<0, \\
\int_{\ssin_2(t,x)}^t g\left(\chi_2(s;t,x)\right) \hat{y}_1(s,0) \, ds
& \mbox{ if } \ssin_2(t,x)>0.
\end{dcases}
\end{equation}

Next, we show a uniform lower bound for the control time:

\begin{lemma}\label{Topt best}
Let $T>0$.
If the system \eqref{can syst} is null controllable in time $T$, then necessarily
$$T \geq \Topt.$$
\end{lemma}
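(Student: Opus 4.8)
The plan is to establish the two inequalities $T \ge T_1(\Lambda)$ and $T \ge T_2(\Lambda)$ separately, both directly from the explicit representation formulas \eqref{explicit formula 1}--\eqref{explicit formula 2}. In each case the strategy is identical: for a suitably chosen initial datum, I would exhibit a portion of the final state $(\hat{y}_1(T,\cdot),\hat{y}_2(T,\cdot))$ that is forced to be nonzero no matter how the control $\hat{u}$ is chosen. Note that, unlike the sharp Theorem \ref{thm can syst}, this crude bound should not require the Titchmarsh convolution theorem, which is reserved for tracking the fine dependence on $g$ through $\Xm{1}{g}$.

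For the bound $T \ge T_1(\Lambda)$ I would argue on the first equation, which is completely decoupled. Assume $T < T_1(\Lambda)$. Since $\ssin_1(T,0) = T - T_1(\Lambda) < 0$, by continuity of $\ssin_1$ there is a neighborhood $(0,\delta)$ of $x=0$ on which $\ssin_1(T,x) < 0$, so the first line of \eqref{explicit formula 1} applies and $\hat{y}_1(T,x) = \hat{y}_1^0(\chi_1(0;T,x))$ there. As $x \mapsto \chi_1(0;T,x)$ is a diffeomorphism, choosing $\hat{y}_1^0$ nonzero on its image forces $\hat{y}_1(T,\cdot) \neq 0$ independently of $\hat{u}$, contradicting null controllability; hence $T \ge T_1(\Lambda)$.

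The delicate inequality is $T \ge T_2(\Lambda)$, and I expect it to be the main obstacle. The difficulty is that the source term $g(x)\hat{y}_1(t,0)$ is active for \emph{every} $x$ and every $t>0$, so one cannot simply isolate a spatial region untouched by the control via a finite-speed-of-propagation argument. Suppose $T < T_2(\Lambda)$ and set $\hat{y}_1^0 = 0$, so that by \eqref{explicit formula 1} one has $\hat{y}_1(s,0) = 0$ for $s < T_1(\Lambda)$ and $\hat{y}_1(s,0) = \hat{u}(s - T_1(\Lambda))$ for $s > T_1(\Lambda)$. Introduce the uncontrollable region $\Omega = \ens{x \in (0,1) \mid \ssin_2(T,x) < 0} = (\phi_2^{-1}(T),1)$, which has positive measure precisely because $T < T_2(\Lambda) = \phi_2(1)$. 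On $\Omega$, the first line of \eqref{explicit formula 2} yields $\hat{y}_2(T,x) = \hat{y}_2^0(\chi_2(0;T,x)) + (K\hat{u})(x)$, where $(K\hat{u})(x) = \int_{T_1(\Lambda)}^{T} g(\chi_2(s;T,x))\,\hat{u}(s-T_1(\Lambda))\,ds$ (with the convention $K=0$ when $T \le T_1(\Lambda)$).

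The key point I would exploit is that the kernel $(x,s) \mapsto g(\chi_2(s;T,x))$ is bounded on the bounded domain $\Omega \times (T_1(\Lambda),T)$, so that $K$ is a Hilbert--Schmidt operator into $L^2(\Omega)$, in particular compact; consequently its range is a proper subspace of the infinite-dimensional space $L^2(\Omega)$. On the other hand, the transport map $\hat{y}_2^0 \mapsto \hat{y}_2^0 \circ \chi_2(0;T,\cdot)$ is a reparametrization by a diffeomorphism and hence sends the free parameter $\hat{y}_2^0$ onto all of $L^2(\Omega)$. I would therefore pick $\hat{y}_2^0$ with $\hat{y}_2^0 \circ \chi_2(0;T,\cdot) \notin \mathrm{Range}(K)$; since the range is a subspace, the equation $\hat{y}_2^0 \circ \chi_2(0;T,\cdot) + K\hat{u} = 0$ then has no solution $\hat{u}$, so $\hat{y}_2(T,\cdot)$ cannot vanish on $\Omega$. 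This contradicts null controllability and gives $T \ge T_2(\Lambda)$, which together with the first bound yields $T \ge \Topt$. The argument is uniform in $g \in L^{\infty}(0,1)$, as required: the heart of the matter is that a scalar-in-time forcing, however it is routed through the interior coupling, drives the uncontrollable component only through a compact operator and therefore cannot match an arbitrary transported profile.
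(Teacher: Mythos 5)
Your proposal is correct and follows essentially the same route as the paper: the bound $T \ge T_1(\Lambda)$ comes from the decoupled transport of $\hat{y}_1$, and the bound $T \ge T_2(\Lambda)$ comes from observing that on the region $\{\ssin_2(T,x)<0\}$ the control acts only through a compact integral operator with bounded kernel, which cannot be surjective onto $L^2$, while the transported initial datum $\hat{y}_2^0$ sweeps out all of $L^2$. The only cosmetic difference is that you normalize $\hat{y}_1^0=0$ and view the operator as acting on $\hat{u}$, whereas the paper applies the same compactness argument to the operator acting on the trace $s \mapsto \hat{y}_1(s,0) \in L^2(0,T)$.
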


This result states that the control time cannot be better than the one of the case $g=0$.

\begin{proof}
For $i \in \ens{1,2}$, let $\omega_i$ be the open subset defined by
$$\omega_i=\ens{x \in (0,1) \quad \middle| \quad \ssin_i(T,x)<0}.$$
From \eqref{inv sin} and \eqref{ssin monotonicity}, we see that
\begin{equation}\label{cns T>T_i}
T \geq T_i(\Lambda)
\quad \Longleftrightarrow \quad
\omega_i=\emptyset.
\end{equation}
Therefore, if $T<T_1(\Lambda)$, then we see from \eqref{explicit formula 1} that $\hat{y}_1^0$ can be chosen so that $\hat{y}_1(T,x) \neq 0$ for $x \in \omega_1$, whatever $\hat{u}$ is.
On the other hand, if $T<T_2(\Lambda)$ and if the system \eqref{can syst} is null controllable in time $T$, then for every $\hat{y}^0_2 \in L^2(0,1)$, there exists $\hat{u} \in L^2(0,T)$ such that, for a.e. $x \in \omega_2$, we have
$$
0=
\hat{y}^0_2\left(\chi_2(0;T,x)\right)
+\int_0^T g\left(\chi_2(s;T,x)\right) \hat{y}_1(s,0) \, ds.
$$

\noindent
Since $x \in \omega_2 \mapsto \chi_2(0;T,x)$ is bijective (it is increasing by \eqref{chi increases} and $\omega_2$ is an interval by \eqref{ssin monotonicity}), this implies that the bounded linear operator $K:L^2(0,T) \longrightarrow L^2(\omega_2)$ defined by
$$
(Kh)(x)
=-\int_0^T g\left(\chi_2(s;T,x)\right) h(s) \, ds,
$$
is surjective.
This is impossible since its range is clearly a subset of $L^{\infty}(\omega_2)$, which is a proper subset of $L^2(\omega_2)$ (alternatively, one could note that $K$ is compact and therefore it cannot be surjective over an infinite dimensional space, see e.g. \cite[Theorem 4.18 (b)]{Rud91}).

\end{proof}

The proof of the item \ref{item NC can syst} of Theorem \ref{thm can syst} crucially relies on the Titchmarsh convolution theorem \cite[Theorem VII]{Tit26} (see also \cite[Chapter XV]{Mik78}):
\begin{theorem}\label{TT thm}
Let $\alpha,\beta \in L^1(0,\bar{\tau})$ ($\bar{\tau}>0$).
We have
\begin{equation}\label{conv zero}
\int_0^{\tau} \alpha(\tau-\sigma) \beta(\sigma) \, d\sigma=0, \quad \mbox{ a.e. } 0<\tau<\bar{\tau},
\end{equation}
if, and only if,
$$\Xm{\bar{\tau}}{\alpha}+\Xm{\bar{\tau}}{\beta} \geq \bar{\tau}.$$
\end{theorem}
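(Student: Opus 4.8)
The plan is to prove the two implications separately, the converse being elementary and the direct one being the substance. For the converse I would set $a=\Xm{\bar{\tau}}{\alpha}$ and $b=\Xm{\bar{\tau}}{\beta}$ and assume $a+b\ge\bar{\tau}$. Since $\alpha=0$ a.e.\ on $(0,a)$ and $\beta=0$ a.e.\ on $(0,b)$, in the integral \eqref{conv zero} the integrand $\alpha(\tau-\sigma)\beta(\sigma)$ can be nonzero only where simultaneously $\sigma\ge b$ and $\tau-\sigma\ge a$, which forces $\tau\ge a+b\ge\bar{\tau}$; for $\tau<\bar{\tau}$ no such $\sigma$ exists, so the integral vanishes and \eqref{conv zero} holds.

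For the direct implication I would first discard the trivial case: if $\alpha$ or $\beta$ vanishes a.e.\ on $(0,\bar{\tau})$ then the corresponding $\ell$ equals $\bar{\tau}$ and the inequality is automatic, so I may assume that $a$ and $b$ are precisely the essential infima of the supports of $\alpha,\beta$ (extended by zero to $\R$). The key observation is that the whole statement is equivalent to the additivity of this left endpoint under convolution, namely $\operatorname{ess\,inf}\operatorname{supp}(\alpha*\beta)=a+b$. Indeed, granting this, hypothesis \eqref{conv zero} says $\alpha*\beta=0$ on $(0,\bar{\tau})$, hence $\operatorname{ess\,inf}\operatorname{supp}(\alpha*\beta)\ge\bar{\tau}$, and therefore $a+b\ge\bar{\tau}$, which is exactly the claim. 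The inclusion $\operatorname{supp}(\alpha*\beta)\subseteq\operatorname{supp}\alpha+\operatorname{supp}\beta$ gives the easy inequality $\operatorname{ess\,inf}\operatorname{supp}(\alpha*\beta)\ge a+b$ for free; all the difficulty is in the reverse inequality, i.e.\ in showing that the convolution does not vanish on any interval immediately to the right of $a+b$ (equivalently, after the translations $t\mapsto t-a$ and $t\mapsto t-b$, that a convolution of two functions each nonzero arbitrarily close to the origin cannot vanish on a neighbourhood of the origin).

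To establish this reverse inequality I would pass to the Fourier--Laplace transform. Extending $\alpha,\beta$ by zero and setting $A(z)=\int_0^{\bar{\tau}}\alpha(t)e^{-zt}\,dt$ and $B(z)=\int_0^{\bar{\tau}}\beta(t)e^{-zt}\,dt$, the Paley--Wiener theorem gives entire functions of exponential type whose product $AB$ is the transform of $\alpha*\beta$. The elementary bound $|A(x)|\le\|\alpha\|_{L^1}e^{-ax}$ as $x\to+\infty$ yields $\limsup_{x\to+\infty}x^{-1}\log|A(x)|\le -a$, and similarly for $B$; what I actually need is that these exponents are attained as genuine limits, $\lim_{x\to+\infty}x^{-1}\log|A(x)|=-a$ and the analogous limit for $B$ equal to $-b$. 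Adding them gives $\lim_{x\to+\infty}x^{-1}\log|A(x)B(x)|=-(a+b)$, while the vanishing of $\alpha*\beta$ on $(0,\bar{\tau})$ forces $|A(x)B(x)|\le\|\alpha*\beta\|_{L^1}e^{-\bar{\tau}x}$, hence $-(a+b)\le-\bar{\tau}$ and $a+b\ge\bar{\tau}$. The main obstacle is precisely the existence of these limits (Polya's theorem, i.e.\ the fact that the indicator of the transform along the positive real axis equals $-a$): a priori cancellation in the oscillatory integral could make $A$ decay strictly faster than $e^{-ax}$, and ruling this out requires the theory of the indicator function of entire functions of exponential type together with the Phragmén--Lindelöf principle to control the growth throughout a half-plane. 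This is the heart of Titchmarsh's theorem, and the reason it is quoted here from \cite{Tit26,Mik78} rather than reproved; alternatively one could substitute one of the purely real-variable proofs for this step, but the cancellation/lower-growth estimate remains the crux in every approach.
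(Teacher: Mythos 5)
First, a point of comparison: the paper does not prove this statement at all --- it is imported verbatim as the Titchmarsh convolution theorem from \cite{Tit26} (see also \cite{Mik78}) and used as a black box. So there is no in-paper argument to measure your proposal against; the only question is whether it would stand as an independent proof.

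Your ``if'' direction is complete and correct: for $\tau<\bar{\tau}$ the integrand $\alpha(\tau-\sigma)\beta(\sigma)$ is nonzero only on a null set unless $\sigma\ge \Xm{\bar{\tau}}{\beta}$ and $\tau-\sigma\ge \Xm{\bar{\tau}}{\alpha}$, which forces $\tau\ge\bar{\tau}$. Your reduction of the ``only if'' direction to the additivity of $\operatorname{ess\,inf}\operatorname{supp}$ under convolution (after extension by zero) is also correct and is the standard normalization. However, the proposal does not actually prove the hard half: everything hinges on the lower bound $\limsup_{x\to+\infty}x^{-1}\log|A(x)|\ge -a$ --- i.e., on ruling out super-exponential cancellation in the oscillatory integral --- together with the additivity of these exponents for the product $AB$ (which, as you correctly note, requires more than two separate $\limsup$'s taken along possibly different sequences; one needs genuine limits, or the completely-regular-growth/Cartwright-class machinery behind P\'olya's theorem on the indicator diagram). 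You explicitly defer exactly this step to the indicator-function and Phragm\'en--Lindel\"of theory, which is to say to the content of Titchmarsh's theorem itself. You are candid about this, and your roadmap of the classical complex-analytic proof is accurate, but the net effect is that your write-up is --- like the paper --- ultimately a citation of the theorem, accompanied by a correct proof of the elementary implication and a correct but incomplete sketch of the substantive one. That stance is perfectly defensible and matches the paper's own treatment; it should just not be mistaken for a self-contained proof.
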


\begin{remark}
The difficulty in the proof of this result is the necessary condition, i.e. the implication ``$\Longrightarrow$'', just like it is the case for our main result.
Let us however mention that its proof is easy in case $\alpha$ satisfies the condition in \ref{ex 2} of Example \ref{examples} (by taking derivatives of \eqref{conv zero} and using the injectivity of Volterra transformations of the second kind).
It does not seem trivial for functions of the form \eqref{bump f} though.
\end{remark}

We are now ready to prove the main result of Section \ref{sect can}:

\begin{proof}[Proof of Theorem \ref{thm can syst}]
~
\begin{enumerate}[1)]
\item
Thanks to Lemma \ref{Topt best}, we can assume that $T \geq T_1(\Lambda)$ and $T \geq T_2(\Lambda)$.
This means that $\ssin_1(T,x)>0$ and $\ssin_2(T,x)>0$ for every $x \in (0,1)$ (see \eqref{cns T>T_i} and \eqref{ssin monotonicity}).
It then follows from the explicit formula \eqref{explicit formula 1} and \eqref{explicit formula 2} that $\hat{y}_1(T,\cdot)=0$ if, and only if,
\begin{equation}\label{caract yu zero}
\hat{u}\left(\ssin_1(T,x)\right)=0, \quad 0<x<1,
\end{equation}
and $\hat{y}_2(T,\cdot)=0$ if, and only if,
\begin{equation}\label{caract yd zero}
\int_{\ssin_2(T,x)}^T g\left(\chi_2(s;T,x)\right) \hat{y}_1(s,0) \, ds=0, \quad 0<x<1.
\end{equation}

\item\label{step convo}
Let us focus on the second condition \eqref{caract yd zero}.
Writing $x=\chi_2(T;t,0)$, which belongs to $(0,1)$ for $t \in (\ssin_2(T,1),T)$ (recall in particular \eqref{inv sin}), and using the group properties \eqref{chi invariance} and \eqref{invariance} with the identity $\ssin_2(t,0)=t$, we obtain that $\hat{y}_2(T,\cdot)=0$ if, and only if,
\begin{equation}\label{NC g}
\int_t^T g\left(\chi_2(s;t,0)\right) \hat{y}_1(s,0) \, ds=0, \quad \ssin_2(T,1)<t<T.
\end{equation}

Now we use the fact that $g\left(\chi_2(s;t,0)\right)$ is actually a function of $s-t$.
Indeed, by uniqueness to the solution to the ODE \eqref{ODE xi}, we see that the characteristics take the form
$$\chi_i(s;t,x)=\tilde{\chi}_i(s-t;x),$$
where $s \mapsto \tilde{\chi}_i(s;x)$ is the unique solution to
$$
\begin{dcases}
\pas{\tilde{\chi}_i}(s;x)=\lambda_i(\tilde{\chi}_i(s;x)), \quad \forall s \in \R, \\
\tilde{\chi}_i(0;x)=x.
\end{dcases}
$$

Using the change of variables $\sigma=s-t$ and introducing
$$\alpha(\theta)=\hat{y}_1(-\theta+T,0),
\qquad
\beta(\theta)=g\left(\tilde{\chi}_2(\theta;0)\right),
\quad 0<\theta<T-\ssin_2(T,1),$$
we see that \eqref{NC g} is equivalent to (setting $\tau=T-t$)
\begin{equation}\label{cns yd}
\int_0^{\tau} \alpha(\tau-\sigma) \beta(\sigma) \, d\sigma=0,
\quad 0<\tau<\bar{\tau},
\end{equation}
where
$$\bar{\tau}=T-\ssin_2(T,1).$$

\item
Applying the Titchmarsh convolution theorem (Theorem \ref{TT thm}) we deduce that \eqref{cns yd} is equivalent to
$$\Xm{\bar{\tau}}{\alpha}+\Xm{\bar{\tau}}{\beta} \geq \bar{\tau}.$$

From the explicit expression \eqref{explicit formula 1} and the inverse formula \eqref{inv sin}, we see that
$$
\alpha(\theta)=
\begin{dcases}
\hat{y}^0_1\left(\chi_1(0;-\theta+T,0)\right)
& \mbox{ if } \theta>T-\ssout_1(0,1), \\
\hat{u}\left(\ssin_1(-\theta+T,0)\right)
& \mbox{ if } \theta<T-\ssout_1(0,1).
\end{dcases}
$$
Therefore, we can choose $\hat{y}_1^0$ so that
$$
\alpha(\theta) \neq 0, \quad \forall \theta \in
\left(T-\ssout_1(0,1), T-\ssout_1(0,1)+\epsilon\right),
$$
for some $0<\epsilon<\ssout_1(0,1)$.
This yields the bound
$$\Xm{\bar{\tau}}{\alpha} \leq T-\ssout_1(0,1).$$
Consequently, we necessarily have
\begin{equation}\label{cond m=1}
\Xm{\bar{\tau}}{\beta} \geq \ssout_1(0,1)-\ssin_2(T,1).
\end{equation}

Since $s \mapsto \tilde{\chi}_2(s;0)$ is increasing with $\tilde{\chi}_2(0;0)=0$, this is equivalent to
\begin{multline*}
\Xm{1}{g} \geq \tilde{\chi}_2\left(\ssout_1(0,1)-\ssin_2(T,1);0\right)
=\chi_2\left(\ssout_1(0,1); \ssin_2(T,1), 0\right)
\\
=\chi_2\left(\ssout_1(0,1); T, 1\right) \quad (\text{by \eqref{chi invariance} with $s=\ssin_2(T,1)$}).
\end{multline*}

Since $s \mapsto \chi_2(s;T,1)$ is increasing, this is also equivalent to
$$\chi_2^{-1}(\Xm{1}{g};T,1) \geq \ssout_1(0,1)=T_1(\Lambda).$$
Using the explicit expression \eqref{chi inv}, we then obtain the desired condition $T \geq T_1(\Lambda)+\int_{\Xm{1}{g}}^1 \frac{1}{\lambda_2(\xi)} \,d\xi$.

\item
Conversely, assume that $T$ satisfies this condition and $T \geq T_2(\Lambda)$.
Then, \eqref{cond m=1} holds by the previous equivalences.
Taking $\hat{u}=0$, we see that $\alpha=0$ in $\left(0,T-T_1(\Lambda)\right)$, which yields
$$\Xm{\bar{\tau}}{\alpha}+\Xm{\bar{\tau}}{\beta} \geq T-T_1(\Lambda)+\ssout_1(0,1)-\ssin_2(T,1)= T-\ssin_2(T,1)=\bar{\tau}.$$
This implies \eqref{cns yd} (here we only use the ``easy part'' of the Titchmarsh convolution theorem) and thus $\hat{y}_2(T,\cdot)=0$.
Finally, note that $\hat{u}=0$ also obviously satisfies \eqref{caract yu zero} and thus $\hat{y}_1(T,\cdot)=0$ as well.

\end{enumerate}

\end{proof}

\begin{remark}
Let us point out that the space dependence of the speeds brings up more technical difficulties than the case of constant speeds (especially the step \ref{step convo}).
\end{remark}

\section{Proof of the main result}\label{sect proof main}

In this section we show how to deduce our main result from Theorem \ref{thm can syst}.

\begin{proof}[Proof of Theorem \ref{main thm}]
~
\begin{enumerate}[1)]
\item
First of all, let us recall that the initial system \eqref{syst} is null controllable in time $T$ (resp. finite-time stabilizable with settling time $T$) if, and only if, so is the canonical system \eqref{can syst} (with $g$ given by \eqref{def g}).
Therefore, thanks to Theorem \ref{thm can syst} it suffices to show that
$$
T \geq T_1(\Lambda)+\int_{\Xm{1}{g}}^1 \frac{1}{\lambda_2(\xi)} \,d\xi
\quad \Longleftrightarrow \quad
T \geq \int_{\Xm{\xmax}{c}}^1 \left(\frac{1}{-\lambda_1(\xi)}+\frac{1}{\lambda_2(\xi)}\right) \, d\xi,
$$
which amounts to characterize $\Xm{1}{g}$ in terms of $\Xm{\xmax}{c}$ (we recall that $\xmax$ is defined in the statement of Theorem \ref{main thm}).
To this end, we are going to prove the identity
\begin{equation}\label{caract xg xc}
\phi_2(\Xm{1}{g})=\phi_1(\Xm{\xmax}{c})+\phi_2(\Xm{\xmax}{c}),
\end{equation}
where we recall that $\phi_1,\phi_2 \in C^{1,1}([0,1])$ are defined in \eqref{def phii}.

\item
We recall that $g(x)=-k_{21}(x,0)\lambda_1(0)$, where $k_{21}$ is the solution in $\Tau$ to
\begin{equation}\label{kern equ anew}
\begin{dcases}
\lambda_2(x)\px{k_{21}}(x,\xi)+\pxi{k_{21}}(x,\xi)\lambda_1(\xi)
+k_{21}(x,\xi)\pxi{\lambda_1}(\xi)
+k_{22}(x,\xi)\tilde{c}(\xi)=0, \\
k_{21}(x,x)=\frac{\tilde{c}(x)}{\lambda_2(x)-\lambda_1(x)},
\end{dcases}
\end{equation}
and where $\tilde{c}$ is defined in \eqref{def ct} and \eqref{def ei} (note that $\Xm{\epsilon}{\tilde{c}}=\Xm{\epsilon}{c}$ for any $\epsilon \in (0,1]$).
Let $s \mapsto \chi(s;x)$ be the associated characteristic passing through $(x,\xi)=(x,0)$, i.e. the solution to the ODE
\begin{equation}\label{ODE chi21}
\begin{dcases}
\pas{\chi}(s;x)=\frac{\lambda_1(\chi(s;x))}{\lambda_2(s)}, \quad \forall s \in \R, \\
\chi(x;x)=0,
\end{dcases}
\end{equation}
(we recall that $\lambda_1,\lambda_2$ have been extended to $\R$ in Section \ref{sect caract}).
We have $\chi \in C^1(\R^2)$ by classical regularity results on ODEs with
$$
\px{\chi}(s;x)=\frac{-\lambda_1(\chi(s;x))}{\lambda_2(x)}>0.
$$
Since $f:s \mapsto s-\chi(s;x)$ is continuous and increasing with $\lim_{s \to \mp \infty} f(s)=\mp \infty$, there exists a unique solution $\ssin(x) \in \R$ to
$$\chi\left(\ssin(x);x\right)=\ssin(x).$$
Besides, for every $x \in(0,1)$, we have $0<\ssin(x)<x$ and
$$(s,\chi(s;x)) \in \Tau, \quad \forall s \in (\ssin(x),x).$$
By the implicit function theorem we have $\ssin \in C^1(\R)$ with, for every $x \in \R$,
\begin{equation}\label{ssin prime}
(\ssin)'(x)=\frac{\px{\chi}(\ssin(x);x)}{1-\pas{\chi}(\ssin(x);x)}>0.
\end{equation}

In particular, the inverse function $(\ssin)^{-1}:[0,\ssin(1)] \longrightarrow [0,1]$ exists.
We are going to show that
\begin{equation}\label{xg ssin}
\Xm{\ssin(1)}{\tilde{c}}=\ssin(\Xm{1}{g}).
\end{equation}

Along the characteristics, the solution to \eqref{kern equ anew} satisfies, for $s \in \left(\ssin(x),x\right)$,
$$
\begin{dcases}
\frac{d}{ds} k_{21}(s,\chi(s;x))=
\frac{-\pxi{\lambda_1}(\chi(s;x))}{\lambda_2(s)}k_{21}(s,\chi(s;x))
+\frac{-k_{22}(s,\chi(s;x))}{\lambda_2(s)}\tilde{c}(\chi(s;x)), \\
k_{21}\left(\ssin(x),\ssin(x)\right)=\frac{\tilde{c}(\ssin(x))}{\lambda_2(\ssin(x))-\lambda_1(\ssin(x))}.
\end{dcases}
$$

Consequently,
\begin{equation}\label{def trace k21}
k_{21}(x,0)=r(x)\tilde{c}(\ssin(x))+\int_{\ssin(x)}^x h(x,\sigma) \tilde{c}\left(\chi(\sigma;x)\right) \,d\sigma,
\end{equation}
with
$$
r(x)=\exp\left(
\int_{\ssin(x)}^x \frac{-\pxi{\lambda_1}(\chi(s;x))}{\lambda_2(s)} \, ds
\right) \frac{1}{\lambda_2(\ssin(x))-\lambda_1(\ssin(x))},
$$
and
$$
h(x,\sigma)=
\exp\left(
\int_{\sigma}^x \frac{-\pxi{\lambda_1}(\chi(s;x))}{\lambda_2(s)} \, ds
\right)
\frac{-k_{22}(\sigma,\chi(\sigma;x))}{\lambda_2(\sigma)}.
$$

Using the change of variable $\theta=(\ssin)^{-1}(\chi(\sigma;x))$, we obtain
$$\frac{1}{r(x)}
k_{21}(x,0)=\tilde{c}(\ssin(x))+\int_{0}^{x} \tilde{h}(x,\theta) \tilde{c}\left(\ssin(\theta)\right) \,d\theta,$$
with kernel
$$
\tilde{h}(x,\theta)=
\frac{1}{r(x)}
h\left(x,\chi^{-1}(\ssin(\theta);x)\right)
\frac{(\ssin)'(\theta)}{\pas{\chi}(\chi^{-1}(\ssin(\theta);x);x)}.
$$

We can check that $\tilde{h} \in L^{\infty}(\Tau)$ (recall \eqref{ssin prime}).
It follows from the injectivity of Volterra transformations of the second kind that
$$\Xm{1}{g}=\Xm{1}{\tilde{c} \,\circ \ssin},$$
which is equivalent to \eqref{xg ssin} since $\ssin$ is increasing with $\ssin(0)=0$.

\item
To conclude the proof, it remains to observe that the solution to the ODE \eqref{ODE chi21} satisfies
$$\phi_1(\chi(s;x))=\phi_2(x)-\phi_2(s),$$
for every $x \in [0,1]$ and $s \in [\ssin(x),x]$.
Taking $x=1$ and $s=\ssin(1)$, we see that $\ssin(1)=\xmax$ (by uniqueness of the solution to the equation $\phi_1(\xmax)+\phi_2(\xmax)=\phi_2(1)$).
Taking then $x=\Xm{1}{g}$ and $s=\ssin(\Xm{1}{g})=\Xm{\xmax}{\tilde{c}}$ (recall \eqref{xg ssin}), we obtain the desired identity \eqref{caract xg xc}.

\end{enumerate}
\end{proof}

\begin{remark}
In the proof of Theorem \ref{main thm}, we have not used the apparent freedom for the boundary data of $k_{22}$ provided by Theorem \ref{thm kern}.
\end{remark}

\section{Extensions and open problems}\label{sect ext}

The results of this paper can be partially extended to systems of more than $2$ equations.
More precisely, we can consider the following $n \times n$ systems ($n \geq 2$):

\begin{equation}\label{syst m=1}
\begin{dcases}
\pt{y_1}(t,x)+\lambda_1(x) \px{y_1}(t,x)=a(x)y_1(t,x)+B(x)y_+(t,x), \\
\pt{y_+}(t,x)+\Lambda_+(x) \px{y_+}(t,x)=C(x)y_1(t,x)+D(x)y_+(t,x), \\
\begin{aligned}
& y_1(t,1)=u(t), && y_+(t,0)=Qy_1(t,0),  \\
& y_1(0,x)=y_1^0(x), && y_+(0,x)=y_+^0(x),
\end{aligned}
\end{dcases}
\quad t \in (0,+\infty), \, x \in (0,1).
\end{equation}
In \eqref{syst m=1}, $(y_1(t,\cdot),y_+(t,\cdot)) \in \R \times \R^{n-1}$ is the state at time $t$, $(y_1^0,y_+^0)$ is the initial data and $u(t) \in \R$ is the control at time $t$.
We assume that we have one negative speed $\lambda_1 \in C^{0,1}([0,1])$ and $n-1$ positive speeds $\lambda_2,\ldots,\lambda_n \in C^{0,1}([0,1])$ such that:
\begin{equation}\label{hyp speeds m=1}
\lambda_1(x)<0<\lambda_2(x)<\cdots<\lambda_n(x), \quad \forall x \in [0,1],
\end{equation}
and we use the notation $\Lambda_+=\diag(\lambda_2,\ldots,\lambda_n)$.
Finally, $a \in L^{\infty}(0,1)$, $B \in L^{\infty}(0,1)^{1 \times (n-1)}$, $C \in L^{\infty}(0,1)^{n-1}$, $D \in L^{\infty}(0,1)^{(n-1) \times (n-1)}$ couple the equations of the system inside the domain and the constant matrix $Q \in \R^{n-1}$ couples the equations of the system on the boundary $x=0$.

Let us now introduce the times defined by
$$
T_1(\Lambda)=\int_0^1 \frac{1}{-\lambda_1(\xi)} \, d\xi,
\quad
T_i(\Lambda)=\int_0^1 \frac{1}{\lambda_i(\xi)} \, d\xi, \quad \forall i \in \ens{2,\ldots,n}.
$$
Note that $T_n(\Lambda)<\ldots<T_2(\Lambda)$ by \eqref{hyp speeds m=1}.

It was established in \cite{DMVK13} and \cite[Lemma 3.1]{HDMVK16} that the system \eqref{syst m=1} is finite-time stabilizable with setting time $T$ if $T \geq \Tunif$, where $\Tunif$ is still given by \eqref{def Topt Tunif}.

Using the backstepping method (see e.g. \cite[Section 2.2]{HVDMK19}), it can be shown as before that the system \eqref{syst m=1} is null controllable in time $T$ (resp. finite-time stabilizable with settling time $T$) if, and only if, so is the system
\begin{equation}\label{can syst m=1}
\begin{dcases}
\pt{\hat{y}_1}(t,x)+\lambda_1(x) \px{\hat{y}_1}(t,x)=0, \\
\pt{\hat{y}_+}(t,x)+\Lambda_+(x) \px{\hat{y}_+}(t,x)=G(x) \hat{y}_1(t,0), \\
\begin{aligned}
& \hat{y}_1(t,1)=\hat{u}(t), && \hat{y}_+(t,0)=Q\hat{y}_1(t,0),  \\
& \hat{y}_1(0,x)=\hat{y}_1^0(x), && \hat{y}_+(0,x)=\hat{y}_+^0(x),
\end{aligned}
\\
\end{dcases}
\quad t \in (0,+\infty), \, x \in (0,1),
\end{equation}
for some $G \in L^{\infty}(0,1)^{n-1}$ depending on all the parameters $\lambda_1,\Lambda_+$, $a,B,C,D$ and $Q$.

By mimicking the proof of Theorem \ref{thm can syst}, we can obtain the following result:

\begin{theorem}\label{thm m=1}
Let $T>0$.
\begin{enumerate}[(i)]
\item
If the system \eqref{can syst m=1} is null controllable in time $T$, then necessarily
\begin{equation}\label{min time can syst m=1}
T \geq \max\ens{T_1(\Lambda)+\max_{i \in \ens{2,\ldots,n}} T(\lambda_i,g_{i-1},q_{i-1}), \quad T_2(\Lambda)},
\end{equation}
where
$$
T(\lambda_i,g_{i-1},q_{i-1})=
\begin{dcases}
\int_{\Xm{1}{g_{i-1}}}^1 \frac{1}{\lambda_i(\xi)} \,d\xi & \mbox{ if } q_{i-1}=0, \\
T_i(\Lambda) & \mbox{ if } q_{i-1} \neq 0.
\end{dcases}
$$

\item
If the time $T$ satisfies \eqref{min time can syst m=1}, then the system \eqref{can syst m=1} is finite-time stable with settling time $T$.
\end{enumerate}
\end{theorem}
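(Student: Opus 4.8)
The plan is to run the argument of Theorem~\ref{thm can syst} separately on each of the $n-1$ scalar equations making up $\hat{y}_+$, the only genuinely new ingredient being the boundary coupling coefficients $q_{i-1}$. Writing $G=(g_1,\ldots,g_{n-1})$ and $Q=(q_1,\ldots,q_{n-1})$, the $i$-th component ($i\in\ens{2,\ldots,n}$) of $\hat{y}_+$ solves a transport equation with speed $\lambda_i$, source $g_{i-1}(x)\hat{y}_1(t,0)$ and boundary datum $\hat{y}_i(t,0)=q_{i-1}\hat{y}_1(t,0)$, so that, exactly as in \eqref{explicit formula 1}--\eqref{explicit formula 2}, its value is explicit along the characteristics $\chi_i$ (with an extra term $q_{i-1}\hat{y}_1(\ssin_i(t,x),0)$ coming from the boundary). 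First I would establish the uniform lower bound $T\geq\Topt$ by adapting Lemma~\ref{Topt best}: the decoupled equation for $\hat{y}_1$ forces $T\geq T_1(\Lambda)$, while on the set $\ens{x\mid \ssin_2(T,x)<0}$ the boundary term never appears, so the same surjectivity-into-$L^\infty$ argument applied to the slowest positive speed $\lambda_2$ yields $T\geq T_2(\Lambda)$. Since the first entry of the maximum in \eqref{min time can syst m=1} already dominates $T_1(\Lambda)$ (as $T(\lambda_i,g_{i-1},q_{i-1})\geq 0$), this produces the required $T_2(\Lambda)$ entry.

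Assuming now $T\geq\Topt$, so that every characteristic enters through a boundary, I would characterise $\hat{y}_i(T,\cdot)=0$ by reproducing Steps~2--3 of the proof of Theorem~\ref{thm can syst}: the change of variables $x=\chi_i(T;t,0)$, the reduction $\chi_i(s;t,0)=\tilde\chi_i(s-t;0)$, and then $\sigma=s-t$, $\tau=T-t$. This turns the condition into
\[
q_{i-1}\,\alpha(\tau)+\int_0^\tau \alpha(\tau-\sigma)\beta_i(\sigma)\,d\sigma=0,\qquad 0<\tau<\bar\tau_i=T_i(\Lambda),
\]
where $\alpha(\theta)=\hat{y}_1(T-\theta,0)$ and $\beta_i(\theta)=g_{i-1}(\tilde\chi_i(\theta;0))$, using $\bar\tau_i=T-\ssin_i(T,1)=T_i(\Lambda)$. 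The proof then splits on $q_{i-1}$. When $q_{i-1}=0$ this is precisely the first-kind convolution governed by the Titchmarsh theorem (Theorem~\ref{TT thm}); choosing $\hat{y}_1^0$ so that $\alpha$ is nonzero right after $T-T_1(\Lambda)$ gives $\Xm{\bar\tau_i}{\alpha}\leq T-T_1(\Lambda)$, hence $\Xm{\bar\tau_i}{\beta_i}\geq T_1(\Lambda)+T_i(\Lambda)-T$, which via the characteristic change of variables (as in the $2\times2$ case) is equivalent to $T\geq T_1(\Lambda)+\int_{\Xm{1}{g_{i-1}}}^1 \frac{1}{\lambda_i(\xi)}\,d\xi$. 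When $q_{i-1}\neq0$ the displayed relation is instead a Volterra equation of the second kind in $\alpha$, whose only solution is $\alpha=0$ on $(0,T_i(\Lambda))$, i.e. $\hat{y}_1(t,0)=0$ for $t\in(T-T_i(\Lambda),T)$; but for $t<T_1(\Lambda)$ the trace $\hat{y}_1(t,0)=\hat{y}_1^0(\chi_1(0;t,0))$ is fixed by the initial datum and can be chosen nonzero, so null controllability forces $(T-T_i(\Lambda),T)\cap(0,T_1(\Lambda))=\emptyset$, that is $T\geq T_1(\Lambda)+T_i(\Lambda)$. Taking the conjunction of these necessary conditions over $i$ gives the first entry of the maximum in \eqref{min time can syst m=1}, proving item (i).

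For item (ii) I would simply take $\hat{u}=0$. Then $T\geq T_1(\Lambda)$ gives $\hat{y}_1(T,\cdot)=0$ and makes $\alpha$ vanish on $(0,T-T_1(\Lambda))$. If $q_{i-1}\neq0$, the hypothesis $T\geq T_1(\Lambda)+T_i(\Lambda)$ yields $T-T_1(\Lambda)\geq T_i(\Lambda)=\bar\tau_i$, so $\alpha\equiv0$ on $(0,\bar\tau_i)$ and the second-kind equation holds with both sides zero. If $q_{i-1}=0$, the hypothesis gives $\Xm{\bar\tau_i}{\beta_i}\geq T_1(\Lambda)+T_i(\Lambda)-T$, whence $\Xm{\bar\tau_i}{\alpha}+\Xm{\bar\tau_i}{\beta_i}\geq T_i(\Lambda)=\bar\tau_i$ and the easy direction of the Titchmarsh theorem makes the convolution vanish. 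In either case $\hat{y}_i(T,\cdot)=0$, so $\hat{u}=0$ produces finite-time stability.

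The main obstacle, relative to Theorem~\ref{thm can syst}, is the case $q_{i-1}\neq0$, where the Titchmarsh theorem no longer applies since the integral relation is not a first-kind convolution. The resolution is to read it as a second-kind Volterra equation, forcing $\alpha$ to vanish, and then to locate the obstruction precisely in the initial-data-determined part of the trace $\hat{y}_1(\cdot,0)$ on $(0,T_1(\Lambda))$, which is exactly what yields the extra time $T_i(\Lambda)$. The remaining difficulty is bookkeeping: carrying the spatially-varying-speed change of variables through all $n-1$ components at once, and checking that the several necessary conditions can each be violated by an independent choice of initial datum, so that their conjunction is indeed necessary.
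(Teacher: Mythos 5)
Your proposal is correct and follows exactly the route the paper intends: the paper states Theorem~\ref{thm m=1} without a written proof, saying only that it is obtained ``by mimicking the proof of Theorem~\ref{thm can syst}'', and your componentwise reduction to $q_{i-1}\alpha(\tau)+\int_0^\tau\alpha(\tau-\sigma)\beta_i(\sigma)\,d\sigma=0$ on $(0,T_i(\Lambda))$ is that mimicking carried out faithfully. The one genuinely new ingredient --- reading the case $q_{i-1}\neq 0$ as a Volterra equation of the second kind, concluding $\alpha=0$ on $(0,T_i(\Lambda))$, and locating the obstruction in the initial-data-determined part of the trace $\hat{y}_1(\cdot,0)$ on $(0,T_1(\Lambda))$ to force $T\geq T_1(\Lambda)+T_i(\Lambda)$ --- is sound and consistent with the value $T(\lambda_i,g_{i-1},q_{i-1})=T_i(\Lambda)$ asserted in the statement.
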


However, we are unable so far to deduce from this result some explicit condition for the initial system \eqref{syst m=1}.
The main technical problem is that $G$ is heavily coupled on the parameters $\lambda_1,\Lambda_+$, $a,B,C,D$ and $Q$ (see e.g. \cite[Section 2.2]{HVDMK19}).
We leave it as an open problem that could be investigated in future works.

\section*{Acknowledgements}

The first author would like to thank Institute of Mathematics in Jagiellonian University for its hospitality.
This work was initiated while he was visiting there.
This project was supported by National Natural Science Foundation of China (No. 12071258) and the Young Scholars Program of Shandong University (No. 2016WLJH52).


\bibliographystyle{amsalpha}
\bibliography{biblio}

\end{document}